\documentclass[11pt,reqno]{amsart}
\usepackage{amsmath}
\usepackage{amssymb}
\usepackage[bookmarks]{hyperref}
\newtheorem{theorem}{Theorem}[section]

\newtheorem{proposition}[theorem]{Proposition}

\newtheorem{remark}{Remark}[section]

\newtheorem{problem}[theorem]{Problem}

\numberwithin{equation}{section}

\allowdisplaybreaks[1]
\begin{document}

\title[The zero-product problem]{The zero-product problem for Toeplitz operators with radial symbols}
\author{Trieu Le}
\address{Trieu Le, Department of Mathematics, University of Toronto, Toronto, Ontario, Canada M5S 2E4}
\email{trieu.le@utoronto.ca}
\begin{abstract}
For any bounded measurable function $f$ on the unit ball $\mathbb{B}_n$, let $T_f$ be the Toeplitz operator with symbol $f$ acting on the Bergman space $A^2(\mathbb{B}_n)$. The Zero-Product Problem asks: if $f_1,\ldots, f_N$ are bounded measurable functions such that $T_{f_1}\cdots T_{f_N}=0$, does it follow that one of the functions must be zero almost everywhere? This paper give the affirmative answer to this question when all except possibly one of the symbols are radial functions. The answer in the general case remains unknown.
\end{abstract}

\subjclass[2000]{Primary 47B35}
\keywords{
Bergman spaces, Toeplitz Operators, Zero-Product Problem.
}
\thanks{}
\date{}
\maketitle

\section{INTRODUCTION}
For $n\geq 1,$ let $\mathbb{C}^{n}$ denote the Cartesian product of $n$ copies of $\mathbb{C}$. For any two points $z=(z_1,\ldots,z_n)$ and $w=(w_1,\ldots,w_n)$ in $\mathbb{C}^n,$ we use the notations $\langle z,w\rangle=z_1\overline{w}_1+\cdots+z_n\overline{w}_n$ and $|z|=\sqrt{|z_1|^2+\cdots+|z_n|^2}$ for the inner product and the associated Euclidean norm. Let $\mathbb{B}_n$ denote the open unit ball which consists of points $z\in\mathbb{C}^n$ with $|z|<1.$ Let ${\rm d}\nu_{n}$ denote the Lebesgue measure on $\mathbb{B}_n$ which is normalized so that $\nu_{n}(\mathbb{B}_n)=1$.

For $1\leq p\leq\infty$ we write $L^{p}=L^{p}(\mathbb{B}_n,\mathrm{d}\nu_{n})$. The Bergman space $A^2$ is the closed subspace of $L^2$ which consists of all functions that are holomorphic in $\mathbb{B}_n$. For any multi-index $m=(m_1,\ldots, m_n)$ in $\mathbb{N}^n$ (throughout the paper $\mathbb{N}$ denotes the set of all non-negative integers), let $|m|=m_1+\cdots+m_n$ and $m!=m_1!\cdots m_n!$. For any $z=(z_1,\ldots, z_n)\in\mathbb{C}^{n}$, let $z^{m}=z_1^{m_1}\cdots z_n^{m_n}$. The standard orthonormal basis for $A^2$ is given by $\{e_{m}: m\in\mathbb{N}^n\}$, where
\begin{equation*}
e_{m}(z) = \Big[\dfrac{(n+|m|)!}{m!\ n!}\Big]^{1/2}z^{m},\ m\in\mathbb{N}^n, z\in\mathbb{B}_n.
\end{equation*}
For a more detailed discussion of $A^2$, see \cite[Chapter 2]{Zhu2005}. Let $P$ denote the orthogonal projection from $L^2$ onto $A^2$. Then we have
\begin{equation*}
(P\varphi)(z)=\int\limits_{\mathbb{B}_n}\dfrac{\varphi(w)}{(1-\langle z,w\rangle )^{n+1}}\ \mathrm{d}\nu_{n}(w),\text{ for }\varphi\in L^2,\ z\in \mathbb{B}_n.
\end{equation*}

For any $f\in L^{2}$, the Toeplitz operator with symbol $f$ is denoted by $T_f$ which is densely defined on $A^2$ by $T_{f}\varphi=P(f\varphi)$ for all bounded holomorphic functions $\varphi$ on $\mathbb{B}_n$. If $f$ is actually a bounded function then $T_{f}$ is a bounded operator on $A^2$ with $\|T_f\|\leq\|f\|_{\infty}$ and $(T_{f})^{*}=T_{\bar{f}}$. If $f$ and $\bar{g}$ are bounded holomorphic functions then $T_{g}T_{f}=T_{gf}$. These properties can be verified directly from the definition of Toeplitz operators.

There is an extensive literature on Toeplitz operators on the Hardy space $H^2$ of the unit circle. We refer to \cite{Martinez-Avendano2007} for definitions of $H^2$ and their Toeplitz operators. In the context of Toeplitz operators on $H^2$, it was showed by Brown and Halmos \cite{Brown1963} back in the sixties that if $f$ and $g$ are bounded functions on the unit circle then $T_{g}T_{f}$ is another Toeplitz operator if and only if either $f$ or $\bar{g}$ is holomorphic. From this it is readily deduced that if $f,g\in L^{\infty}$ such that $T_{g}T_{f}=0$ then one of the symbols must be the zero function. A more general question concerning products of several Toeplitz operators (on $H^2$ or $A^2$) is the so-called ``zero-product'' problem.

\begin{problem}\label{prob-1}
Suppose $f_1,\ldots,f_N$ are functions in $L^{\infty}$ such that $T_{f_1}\cdots T_{f_N}$ is the zero operator. Does it follow that one of the functions $f_{j}$'s must be the zero function?
\end{problem}

For Toeplitz operators on $H^2$, the affirmative answer was proved by K.Y. Guo \cite{Guo1996} for $N=5$ and by C. Gu \cite{Gu2000} for $N=6$. Problem \ref{prob-1} for general $N$ remains open. For Toeplitz operators on the Bergman space of the unit disk, Ahern and {\v{C}}u{\v{c}}kovi{\'c} \cite{Ahern2001} answered Problem \ref{prob-1} affirmatively for the case $N=2$ with an additional assumption that both symbols are bounded harmonic functions. In fact they studied a type of Brown-Halmos Theorem for Toeplitz operators on the Bergman space. They showed that if $f$ and $g$ are bounded harmonic functions and $h$ is a bounded $C^2$ function whose invariant Laplacian is also bounded then the equality $T_{g}T_{f}=T_{h}$ holds only in the trivial case, that is, when $f$ or $\bar{g}$ is holomorphic. As a corollary they obtained their solution to Problem \ref{prob-1}. This result was generalized to Toeplitz operators on the Bergman space of the unit ball in $\mathbb{C}^{n}$ by B. Choe and K. Koo in \cite{Choe2006} with an assumption about the continuity of the symbols on an open subset of the boundary. They were also able to show that if $f_1,\ldots, f_{n+3}$ (here $N=n+3$) are bounded harmonic functions that have Lipschitz continuous extensions to the whole boundary of the unit ball then $T_{f_{1}}\cdots T_{f_{n+3}}=0$ implies that one of the symbols must be zero. The answer in the general case remains unknown, even for two Toeplitz operators.

The purpose of this paper is to report a simple solution to Problem \ref{prob-1} when all but possibly one of the symbols are radial functions (see definition below). Since Toeplitz operators with radial symbols are diagonal, they are easier to work with. But to the best of the author's knowledge, Problem \ref{prob-1} with radial symbols has not been fully studied. The author has been aware of only the paper \cite{Ahern2004} by Ahern and {\v{C}}u{\v{c}}kovi{\'c}, in which, among other things, they showed that in the one dimensional case, if $f$ or $g$ is radial and $T_gT_f=0$ then either $f$ or $g$ is the zero function. It is not clear how their method will work for products of more than two Toeplitz operators or in the setting of Bergman spaces in higher dimensions.

\section{MAIN RESULT}

A function $f$ on $\mathbb{B}_n$ is called a radial function if there is a function $\tilde{f}$ on $[0,1)$ such that $f(z)=\tilde{f}(|z|)$ for all $z\in\mathbb{B}_n$. For any bounded radial function $f$ and any real number $s\geq 0$, put
\begin{equation*}
\omega(f,s)=(n+s)\int\limits_{0}^{1}r^{n+s-1}\tilde{f}(r^{1/2}){\rm d}r.
\end{equation*}
This quantity is related to the Mellin transform which was used in the study of Toeplitz operators with radial symbols in \cite{Ahern2004}. Using integration in polar coordinates, we can show that the operator $T_{f}$ is diagonal with respect to the standard orthonormal basis. In fact, we have
\begin{equation*}
T_{f} = \sum\limits_{m\in\mathbb{N}^n}\omega(f,|m|)e_{m}\otimes e_{m}.
\end{equation*}
Here for $g,h\in A^2$,\ $g\otimes h$ is the operator defined by $(g\otimes h)(\varphi)=\langle\varphi,h\rangle g$ for all $\varphi\in A^2$. The main result of the paper is the following Theorem.

\begin{theorem}\label{theorem-1} Suppose $f_1,\ldots, f_N$ and $g_1,\ldots, g_M$ are bounded radial functions so that none of them is the zero function. Suppose $f$ is a function in $L^2$ such that the operator $T_{f_1}\cdots T_{f_N}T_{f}T_{g_1}\cdots T_{g_M}$ (which is densely defined on $A^2$) is the zero operator. Then $f$ must be the zero function.
\end{theorem}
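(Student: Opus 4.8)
The plan is to exploit that the two flanking products of Toeplitz operators are diagonal and to reduce the vanishing of the whole product to a statement about the matrix entries $\langle f e_m,e_k\rangle$, which (up to a nonzero normalizing constant) equal $\int_{\mathbb{B}_n} f\,z^m\overline{z^k}\,\mathrm{d}\nu_n$. Writing $D=T_{f_1}\cdots T_{f_N}$ and $E=T_{g_1}\cdots T_{g_M}$, the diagonal formula for radial Toeplitz operators gives $D e_k=\mu(|k|)e_k$ and $E e_m=\lambda(|m|)e_m$ with $\mu(j)=\prod_{i=1}^N\omega(f_i,j)$ and $\lambda(j)=\prod_{\ell=1}^M\omega(g_\ell,j)$. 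Since $e_k\in A^2$ and $P^*=P$, a direct computation gives $\langle D T_f E\,e_m,e_k\rangle=\mu(|k|)\,\lambda(|m|)\,\langle f e_m,e_k\rangle$, so the hypothesis $DT_fE=0$ forces
\[
\langle f e_m,e_k\rangle=0\qquad\text{whenever }\lambda(|m|)\ne 0\text{ and }\mu(|k|)\ne 0.
\]
The goal is to propagate these relations to \emph{all} pairs $(m,k)$ and then use density of $\{z^m\overline{z^k}\}$ in $L^2$ to conclude $f=0$.

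The first genuine step is to show the eigenvalue sequences vanish only on a sparse set of indices. For a nonzero bounded radial $h$, the function $s\mapsto\Phi_h(s):=\int_0^1 r^{\,n+s-1}\tilde h(r^{1/2})\,\mathrm{d}r$ is analytic and bounded on the right half-plane, since $|\Phi_h(s)|\le\|h\|_\infty/(n+\mathrm{Re}\,s)$; moreover $\Phi_h\not\equiv 0$, for otherwise every moment $\int_0^1 r^{\,n+j-1}\tilde h(r^{1/2})\,\mathrm{d}r$ vanishes and a Weierstrass/moment argument gives $h=0$. As $\omega(h,s)=(n+s)\Phi_h(s)$, the zeros of $\omega(h,\cdot)$ among the integers $j\ge 1$ are interior zeros of a bounded analytic function; the Blaschke condition (after a conformal map to the disk, where $1-|w_j|\asymp 1/j$) then shows $Z_h:=\{j:\omega(h,j)=0\}$ satisfies $\sum_{j\in Z_h,\,j\ge 1}1/j<\infty$. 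Consequently both $B:=\{j:\mu(j)\ne 0\}=\mathbb{N}\setminus\bigcup_i Z_{f_i}$ and $A:=\{j:\lambda(j)\ne 0\}=\mathbb{N}\setminus\bigcup_\ell Z_{g_\ell}$ have complements with convergent reciprocal sums.

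The key device for the propagation is that multiplying a monomial by $|z|^{2t}$ shifts both degrees by $t$ while leaving the angular part unchanged: for any multi-indices $m,k$ and any $t\ge 0$,
\[
z^m\overline{z^k}\,|z|^{2t}=\sum_{|\alpha|=t}\binom{t}{\alpha}\,z^{m+\alpha}\overline{z^{k+\alpha}},
\]
so if $|m|+t\in A$ and $|k|+t\in B$ every summand pairs to zero against $f$, whence $\int_{\mathbb{B}_n}f\,z^m\overline{z^k}\,|z|^{2t}\,\mathrm{d}\nu_n=0$. Passing to polar coordinates $z=r\zeta$ with normalized surface measure $\sigma$ on $\partial\mathbb{B}_n$, and setting $F_{m,k}(r)=\int_{\partial\mathbb{B}_n}f(r\zeta)\,\zeta^m\overline{\zeta^k}\,\mathrm{d}\sigma(\zeta)$, this reads (up to a positive constant) $\int_0^1 r^{\,2n-1+|m|+|k|+2t}F_{m,k}(r)\,\mathrm{d}r=0$ for every admissible $t$. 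The admissible set $T_{m,k}=\{t\ge 0:|m|+t\in A,\ |k|+t\in B\}$ is co-sparse by the previous paragraph, so the exponents satisfy $\sum_{t\in T_{m,k}}1/(2n-1+|m|+|k|+2t)=\infty$; the Müntz–Szász theorem then forces the $L^2(0,1)$ function $r\mapsto r^{\,2n-1+|m|+|k|}F_{m,k}(r)$ to vanish. Hence $F_{m,k}=0$ for every $m,k$, so for a.e.\ $r$ the slice $f(r\,\cdot)$ is orthogonal in $L^2(\partial\mathbb{B}_n)$ to every $\zeta^m\overline{\zeta^k}$; by density of these functions, $f(r\,\cdot)=0$ a.e., i.e.\ $f=0$.

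The one real subtlety I anticipate is the second step: the zero sets $Z_h$ need not be finite, so one cannot cancel eigenvalue factors pairwise in any naive way. What rescues the argument is that boundedness of $\Phi_h$ on the half-plane forces $Z_h$ to be exactly Blaschke-sparse, and this is precisely the Müntz-type smallness needed to run the radial completeness argument after the $|z|^{2t}$ shift. Marrying these two facts---the Blaschke sparseness of the eigenvalue zero sets and the Müntz–Szász completeness of the shifted radial powers---is the crux; the remaining ingredients (diagonality, the polar-coordinate identity, and density of the monomials $\zeta^m\overline{\zeta^k}$ on the sphere) are routine.
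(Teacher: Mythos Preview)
Your argument is correct and reaches the same conclusion, but the propagation step is organized differently from the paper. The paper packages that step as a separate proposition: from $\int_{\mathbb{B}_n} f\,z^m\bar z^k\,\mathrm{d}\nu_n=0$ for $|m|,|k|\notin S$ with $\sum_{s\in S\setminus\{0\}}1/s<\infty$, it deduces $f=0$ by slicing $z=(z_1,\sqrt{1-|z_1|^2}\,w)$ and invoking Trent's M\"untz--Sz\'asz theorem for $C(\bar{\mathbb{B}}_1)$ in the variable $z_1$, then using density of the monomials $w^{\tilde m}\bar w^{\tilde k}$ on $\mathbb{B}_{n-1}$. Your route instead exploits the multinomial identity $|z|^{2t}=\sum_{|\alpha|=t}\binom{t}{\alpha}z^\alpha\bar z^\alpha$ to shift both total degrees by $t$, passes to polar coordinates, and applies the classical one-variable M\"untz--Sz\'asz theorem to the radial moments of the spherical coefficients $F_{m,k}$. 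This is arguably more elementary: it avoids Trent's disk theorem altogether and treats all $n$ uniformly via the sphere, at the cost of introducing the auxiliary shift parameter $t$. For the sparseness of the eigenvalue zero sets you argue via the Blaschke condition for the bounded analytic function $\Phi_h$ on a half-plane, whereas the paper reads it off directly from the M\"untz--Sz\'asz theorem by duality; these are two standard, essentially equivalent ways to get $\sum_{j\in Z_h}1/j<\infty$. One small point worth tightening in your write-up: the M\"untz--Sz\'asz theorem you quote is for $C([0,1])$, so when you conclude that the $L^2(0,1)$ function $r\mapsto r^{2n-1+|m|+|k|}F_{m,k}(r)$ vanishes, you are implicitly using that the span of $\{r^{\lambda_t}\}$ (without the constant) is still dense in $L^2(0,1)$ once $\sum 1/\lambda_t=\infty$; this is true and standard, but deserves a word.
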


Since our proof of Theorem \ref{theorem-1} is based on two versions of the {M}\"untz-{S}z\'asz Theorem, we restate them here and refer the interested reader to the references for their proofs. In fact the proof of the second version (which is for continuous functions on the unit disk) relies on the first version (which is for continuous functions on the unit interval).

\begin{theorem}[Theorem $15.26$ in \cite{Rudin1987}]\label{theorem-2} Suppose $0<\lambda_1<\lambda_2<\lambda_3<\cdots$ and let $X$ be the closure in $C([0,1])$ of the set of all finite linear combinations of the functions
\begin{equation*} 1, t^{\lambda_1}, t^{\lambda_2}, t^{\lambda_3}, \ldots.
\end{equation*}
(a) If $\sum_{j=1}^{\infty}\frac{1}{\lambda_j}=\infty$ then $X=C([0,1])$.

\noindent
(b) If $\sum_{j=1}^{\infty}\frac{1}{\lambda_j}<\infty$, and if $\lambda\notin\{\lambda_j\}$, $\lambda\neq 0$, then $X$ does not contain the function $t^{\lambda}$.
\end{theorem}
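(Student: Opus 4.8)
The plan is to transfer the problem to the Hilbert space $H = L^2([0,1],\mathrm{d}t)$, where the powers $t^a$ (for $a > -1/2$) form a tractable family, and then to pass back to $C([0,1])$ by duality. The computational heart is an explicit formula for the distance in $H$ from a power $t^\lambda$ to the span of finitely many powers. First I would record that for $a,b > -1/2$ one has $\langle t^a, t^b\rangle_H = \int_0^1 t^{a+b}\,\mathrm{d}t = (a+b+1)^{-1}$, so the relevant Gram matrices are Cauchy matrices. Using the closed form of the Cauchy determinant I would establish the key lemma: for distinct $\alpha_1,\ldots,\alpha_N > -1/2$ and $\lambda > -1/2$ with $\lambda \notin \{\alpha_i\}$,
\[
\mathrm{dist}_H\big(t^\lambda,\ \mathrm{span}\{t^{\alpha_1},\ldots,t^{\alpha_N}\}\big) = \frac{1}{\sqrt{2\lambda+1}}\ \prod_{i=1}^{N}\left|\frac{\lambda-\alpha_i}{\lambda+\alpha_i+1}\right|,
\]
by writing the squared distance as the quotient $G(t^\lambda,t^{\alpha_1},\ldots,t^{\alpha_N})/G(t^{\alpha_1},\ldots,t^{\alpha_N})$ of Gram determinants and evaluating both via Cauchy's formula after the substitution $x_i = \alpha_i + 1/2$.

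For part (b) I would apply the lemma with the enlarged exponent set $\{0,\lambda_1,\lambda_2,\ldots\}$, where the exponent $0$ accounts for the constant function $1$. Since $\sum_j 1/\lambda_j < \infty$ we have $\sum_j \big(1 - |\tfrac{\lambda-\lambda_j}{\lambda+\lambda_j+1}|\big) = \sum_j \tfrac{2\lambda+1}{\lambda+\lambda_j+1} < \infty$, so the infinite product converges to a strictly positive limit; the extra factor from the exponent $0$ is $|\lambda/(\lambda+1)| > 0$ because $\lambda \neq 0$, and no factor vanishes because $\lambda \neq \lambda_j$. Hence $t^\lambda$ stays a positive distance from the closed $H$-span of $\{1,t^{\lambda_1},t^{\lambda_2},\ldots\}$, so there is $h \in H$ orthogonal to $1$ and to every $t^{\lambda_j}$ but with $\langle t^\lambda, h\rangle \neq 0$. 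The functional $\varphi \mapsto \int_0^1 \varphi\,\overline{h}\,\mathrm{d}t$ is bounded on $C([0,1])$ (as $h \in L^1$) and annihilates $X$ while not annihilating $t^\lambda$, which gives $t^\lambda \notin X$.

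For part (a) I would instead argue by duality at the level of measures and invoke the theory of bounded holomorphic functions. By the Riesz representation theorem it suffices to show that the only complex Borel measure $\mu$ on $[0,1]$ with $\int_0^1 1\,\mathrm{d}\mu = 0$ and $\int_0^1 t^{\lambda_j}\,\mathrm{d}\mu = 0$ for all $j$ is $\mu = 0$. Given such a $\mu$, the function $F(z) = \int_0^1 t^z\,\mathrm{d}\mu(t)$ is holomorphic and bounded by $\|\mu\|$ on the right half-plane $\{\mathrm{Re}\,z > 0\}$ and vanishes at $0$ and at every $\lambda_j$. Transplanting to the disk by the Cayley-type map $w = (z-1)/(z+1)$ produces a bounded holomorphic $g$ on $\mathbb{D}$ whose zeros $a_j = (\lambda_j-1)/(\lambda_j+1)$ satisfy $\sum_j (1 - |a_j|) = \sum_j \tfrac{2}{\lambda_j+1} = \infty$ since $\sum_j 1/\lambda_j = \infty$. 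As this violates the Blaschke condition, $g \equiv 0$, hence $F \equiv 0$; in particular $\int_0^1 t^k\,\mathrm{d}\mu = 0$ for every integer $k \geq 0$, and the Weierstrass approximation theorem forces $\mu = 0$.

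The main obstacle I anticipate is the distance lemma: correctly identifying the squared distance with the ratio of Gram determinants and then carrying out the Cauchy-determinant evaluation so that the products collapse to the stated closed form. Once that formula is in hand, part (b) is essentially bookkeeping about convergence of the resulting infinite product, and part (a) reduces to the standard fact that a nonzero bounded holomorphic function on the disk must have Blaschke-summable zeros.
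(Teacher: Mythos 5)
Your proposal is correct, but note first that the paper itself offers no proof of this statement: it is quoted as Theorem 15.26 of Rudin's \emph{Real and Complex Analysis}, with the reader referred there for the argument. Measured against Rudin's proof, your part (a) is essentially the same: dualize by Hahn--Banach and Riesz representation, form $F(z)=\int_0^1 t^z\,\mathrm{d}\mu(t)$ on the right half-plane, transplant to the disk by the Cayley map, and invoke the fact that the zeros of a nonzero bounded holomorphic function must satisfy the Blaschke condition, finishing with Weierstrass approximation. Your part (b), however, is a genuinely different route. Rudin proves (b) constructively: he builds an explicit bounded holomorphic function on a half-plane, essentially $\frac{z}{(2+z)^3}\prod_j\frac{\lambda_j-z}{2+\lambda_j+z}$, whose zero set is exactly $\{0,\lambda_1,\lambda_2,\ldots\}$, and realizes it as $\int_0^1 t^z g(t)\,\mathrm{d}t$ for a fixed $g\in L^1([0,1])$ via an inversion argument, so that one single annihilating functional separates every $t^\lambda$ with $\lambda\notin\{0,\lambda_1,\lambda_2,\ldots\}$ from $X$ simultaneously. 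You instead prove the $L^2$ M\"untz theorem through the Gram/Cauchy determinant distance formula (your closed form is correct: the squared distance is the ratio of Gram determinants, and the Cauchy evaluation collapses exactly as you state) and then transfer the separating vector $h$ to a functional on $C([0,1])$ via $\varphi\mapsto\int_0^1\varphi\bar h\,\mathrm{d}t$. Your route is more elementary --- linear algebra plus convergence of an infinite product in place of complex-analytic construction and inversion --- and it yields quantitative lower bounds on distances; Rudin's buys an explicit universal annihilating measure. A few loose ends to tidy, none fatal: in (a) you should define $0^z=0$ so that a possible atom of $\mu$ at $t=0$ is harmless, and $z=0$ is a boundary point of the half-plane, so ``vanishing at $0$'' contributes nothing to the Blaschke count; if the $\lambda_j$ are bounded they accumulate inside the half-plane and $F\equiv 0$ follows from the identity theorem rather than the Blaschke condition; and in both parts your identities $1-|a_j|=\frac{2}{\lambda_j+1}$ and $1-\bigl|\frac{\lambda-\lambda_j}{\lambda+\lambda_j+1}\bigr|=\frac{2\lambda+1}{\lambda+\lambda_j+1}$ hold verbatim only for $\lambda_j\geq 1$, respectively $\lambda_j\geq\lambda$, which suffices since at most finitely many indices violate these once $\lambda_j\to\infty$.
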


\begin{theorem}[T. Trent \cite{Trent1981}]\label{theorem-3} Let $M$ be a subset of $\mathbb{N}\times\mathbb{N}$. For any integer $j$, let $M_{j}$ denote the set $\{s\in\mathbb{N}: (s,s+j)\in M\}$. Then the closure in $C(\bar{\mathbb{B}}_1)$ of all finite linear combinations of functions in $\{1, z^{s}\bar{z}^{t}: (s,t)\in M\}$ is $C(\bar{\mathbb{B}}_1)$ if and only if for each integer $j$, $\sum\limits_{s\in M_{j}\backslash\{0\}}\frac{1}{s}=\infty$.
\end{theorem}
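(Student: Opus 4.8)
The plan is to pass to polar coordinates and exploit the angular Fourier decomposition, which turns the two–dimensional density problem into a family of one–dimensional Müntz–Szász problems indexed by angular frequency. Writing $z=re^{i\theta}$, every generator becomes $z^{s}\bar z^{\,t}=r^{\,s+t}e^{-i(t-s)\theta}$, so the generator indexed by $(s,t)$ lives in the single angular mode $e^{-ij\theta}$ with $j=t-s$ and contributes the radial power $r^{\,2s+j}$, $s\in M_j$. Thus, for fixed $j$, the radial profiles supplied are exactly the powers $r^{2s+j}$ with $s\in M_j$, and after the substitution $u=r^{2}$ the stated condition $\sum_{s\in M_j\setminus\{0\}}1/s=\infty$ is equivalent to the Müntz divergence $\sum 1/(2s+j)=\infty$.

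For the ``if'' direction I would argue by duality. If the closed span $S\neq C(\bar{\mathbb{B}}_1)$, then Hahn--Banach and the Riesz representation theorem give a nonzero finite complex Borel measure $\mu$ with $\int 1\,d\mu=0$ and $\int z^{s}\bar z^{\,t}\,d\mu=0$ for all $(s,t)\in M$. Since polynomials in $z,\bar z$ are dense by Stone--Weierstrass, it suffices to deduce $\int z^{a}\bar z^{\,b}\,d\mu=0$ for all $a,b\ge0$, forcing $\mu=0$, a contradiction. For $j\ge0$ I would introduce the functional $\Lambda_j(\psi)=\int_{\bar{\mathbb{B}}_1}\psi(|z|^{2})\,\bar z^{\,j}\,d\mu$ on $C([0,1])$ (the integrand is continuous on $\bar{\mathbb{B}}_1$), represent it by a Borel measure $\sigma_j$ on $[0,1]$, and record that $\Lambda_j(u^{s})=\int z^{s}\bar z^{\,s+j}\,d\mu=0$ for $s\in M_j$. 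When $0\in M_j$, or when $j=0$ (using the separate relation $\int 1\,d\mu=0$), the constant together with the powers $u^{s}$, $s\in M_j\setminus\{0\}$, is dense in $C([0,1])$ by Theorem~\ref{theorem-2}(a), so $\sigma_j=0$ immediately.

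The remaining case, $j\ge1$ with $0\notin M_j$, is where I expect the real work. Here Theorem~\ref{theorem-2}(a) yields only density of $\{u^{s}:s\in M_j\setminus\{0\}\}$ in the functions vanishing at $0$, so a priori $\sigma_j=c\,\delta_0$ for some constant $c$. To eliminate this atom I would test against functions $\psi_\varepsilon$ with $\psi_\varepsilon(0)=1$ and support in $[0,\varepsilon]$, using the order-$j$ vanishing of $\bar z^{\,j}$ at the origin through the bound $|\psi_\varepsilon(|z|^{2})\bar z^{\,j}|\le\varepsilon^{\,j/2}$ to get $|c|=|\Lambda_j(\psi_\varepsilon)|\le\varepsilon^{\,j/2}\|\mu\|\to0$, whence $c=0$ and $\sigma_j=0$. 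The negative frequencies are symmetric, with $z^{|j|}$ replacing $\bar z^{\,j}$, the resulting index shift leaving the divergence of the reciprocal sum intact. Once every $\sigma_j=0$, taking $\psi(u)=u^{a}$ gives $\int z^{a}\bar z^{\,a+j}\,d\mu=0$ for all $a\ge0$ and all $j$, so $\mu$ annihilates every monomial and hence vanishes.

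For the ``only if'' direction I would contrapose. Assuming $\sum_{s\in M_{j_0}\setminus\{0\}}1/s<\infty$ for some integer $j_0$, the exponents $\{2s+j_0:s\in M_{j_0}\}$ are positive with convergent reciprocal sum, so Theorem~\ref{theorem-2}(b) furnishes some $\lambda>0$ with $\lambda\notin\{2s+j_0\}$ and a measure $\sigma$ on $[0,1]$ satisfying $\int r^{2s+j_0}\,d\sigma=0$ for all $s\in M_{j_0}$, $\int 1\,d\sigma=0$, yet $\int r^{\lambda}\,d\sigma\neq0$. I would then form the tensor functional $L(F)=\int_0^{2\pi}\!\!\int_0^1 F(re^{i\theta})\,e^{ij_0\theta}\,d\sigma(r)\,d\theta$; the angular integral selects precisely the mode $j_0$, so $L$ kills the constant and every generator, while $L(r^{\lambda}e^{-ij_0\theta})=2\pi\int r^{\lambda}\,d\sigma\neq0$, the test function $r^{\lambda}e^{-ij_0\theta}$ being continuous on $\bar{\mathbb{B}}_1$ because $\lambda>0$. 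Thus $S$ is a proper subspace, completing the equivalence.
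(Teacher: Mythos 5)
The paper never proves this theorem: it is quoted from Trent \cite{Trent1981} and used as a black box, with the reader referred to the literature (the text only remarks that the disk version ``relies on'' the interval version, Theorem \ref{theorem-2}). So there is no in-paper proof to compare yours against; your argument must stand on its own, and it does. Your reduction --- angular Fourier modes turning each generator $z^{s}\bar z^{\,s+j}$ into the radial power $r^{2s+j}$ in the mode $e^{-ij\theta}$, duality via Hahn--Banach and Riesz representation, and the interval M\"untz--Sz\'asz theorem applied mode by mode --- is precisely the dependence the paper's remark anticipates, and both directions are sound. Two spots deserve an explicit line each, though both are easily patched. First, for $j\ge 1$ with $0\notin M_j$, Theorem \ref{theorem-2}(a) literally gives density of $\mathrm{span}\{1,u^{s}\}$; you need the standard consequence that $\mathrm{span}\{u^{s}: s\in M_j\}$ is dense in $\{\psi\in C([0,1]):\psi(0)=0\}$, which follows by approximating such a $\psi$ by $g=c_0+\sum c_i u^{s_i}$ and noting $|c_0|=|g(0)-\psi(0)|\le\|g-\psi\|_\infty$, so $g-c_0$ works; your subsequent elimination of the possible atom $c\,\delta_0$ via the estimate $|\Lambda_j(\psi_\varepsilon)|\le\varepsilon^{j/2}\|\mu\|$ is the genuinely nontrivial extra step in this case, and it is correct. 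Second, in the ``only if'' direction, when $j_0=0$ and $0\in M_{0}$ the exponent $2s+j_0$ equals $0$, contradicting your claim that all exponents are positive; simply discard that exponent (it duplicates the constant function, which is already in the spanning set) before invoking Theorem \ref{theorem-2}(b). With these one-line repairs your proof is complete, and it is, in outline, the argument of Trent's original paper.
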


The following Proposition is an important step in our proof of Theorem \ref{theorem-1}.
\begin{proposition}\label{prop-1}
Suppose $S$ is a set of non-negative integers such that $\sum\limits_{s\in S\backslash\{0\}}\frac{1}{s}<\infty$. Suppose $f\in L^{1}(\mathbb{B}_n)$ such that $\int\limits_{\mathbb{B}_n}f(z)z^{m}\bar{z}^{k}{\rm d}\nu_{n}(z)=0$ whenever $m,k\in\mathbb{N}^{n}$ with $|m|,|k|\notin S$. Then $f(z)=0$ for almost every $z$ in $\mathbb{B}_n$.
\end{proposition}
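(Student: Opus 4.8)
The plan is to prove the apparently stronger statement that every monomial moment of $f$ vanishes, i.e.\ $\int_{\mathbb{B}_n} f(z)\,z^{a}\bar z^{b}\,\mathrm{d}\nu_n(z)=0$ for \emph{all} $a,b\in\mathbb{N}^n$, not merely those with $|a|,|b|\notin S$. Once this is done, linearity shows that $f$ annihilates every polynomial in $z$ and $\bar z$; since such polynomials form a self-adjoint, point-separating subalgebra of $C(\overline{\mathbb{B}}_n)$ containing the constants, the Stone--Weierstrass theorem makes them dense in $C(\overline{\mathbb{B}}_n)$, and as $f\in L^1$ defines a bounded functional on $C(\overline{\mathbb{B}}_n)$ we conclude $f\,\mathrm{d}\nu_n=0$, i.e.\ $f=0$ almost everywhere. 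The engine for upgrading the moment conditions is the one-variable M\"untz--Sz\'asz theorem (Theorem~\ref{theorem-2}), applied in the radial variable.

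To set this up, fix arbitrary $a,b\in\mathbb{N}^n$ and, for an integer $\ell\geq 0$, insert the factor $|z|^{2\ell}=(z_1\bar z_1+\cdots+z_n\bar z_n)^{\ell}$. The multinomial theorem gives $z^{a}\bar z^{b}|z|^{2\ell}=\sum_{|p|=\ell}\tfrac{\ell!}{p!}\,z^{a+p}\bar z^{b+p}$, and every term here has $z$-degree $|a|+\ell$ and $\bar z$-degree $|b|+\ell$. Consequently the hypothesis forces
\begin{equation*}
\int_{\mathbb{B}_n} f(z)\,z^{a}\bar z^{b}\,|z|^{2\ell}\,\mathrm{d}\nu_n(z)=0\qquad\text{whenever } |a|+\ell\notin S \text{ and } |b|+\ell\notin S.
\end{equation*}
Writing $z=r\zeta$ with $r=|z|$ and $\zeta\in\partial\mathbb{B}_n$, integrating in polar coordinates, and substituting $u=r^2$, this integral equals a nonzero constant times $\int_0^1 u^{\ell}\,\Phi(u)\,\mathrm{d}u$, where $\Phi(u)=u^{n-1+(|a|+|b|)/2}\,F(u^{1/2})$ and $F(r)=\int_{\partial\mathbb{B}_n} f(r\zeta)\,\zeta^{a}\bar\zeta^{b}\,\mathrm{d}\sigma(\zeta)$ is the angular average. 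A Fubini estimate using $|\zeta^a\bar\zeta^b|\leq 1$ shows $\Phi\in L^1([0,1])$. Thus the problem collapses to a single moment problem on $[0,1]$: the function $\Phi$ has vanishing $\ell$-th moment for every $\ell$ in the ``good'' set $L=\{\ell\geq 1: |a|+\ell\notin S,\ |b|+\ell\notin S\}$.

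The set of ``bad'' positive integers is contained in $(S-|a|)\cup(S-|b|)$, and the assumption $\sum_{s\in S\setminus\{0\}}\tfrac1s<\infty$ together with the comparison $1/(s-|a|)\leq 2/s$ for $s\geq 2|a|$ shows that $\sum_{\ell\geq 1,\,\ell\notin L}\tfrac1\ell<\infty$; since the full harmonic series diverges, $\sum_{\ell\in L}\tfrac1\ell=\infty$. I would then apply Theorem~\ref{theorem-2}(a) to the exponents $L=\{\lambda_1<\lambda_2<\cdots\}$: the span of $\{1\}\cup\{u^{\lambda_j}\}$ is dense in $C([0,1])$. A one-line truncation argument (subtract off the constant term, whose coefficient is controlled by evaluation at $u=0$) removes the constant and shows that $\{u^{\lambda_j}\}$ is already dense in $\{g\in C([0,1]):g(0)=0\}$. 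Hence $\Phi$ annihilates every continuous $g$ vanishing at the origin, and since $\Phi\,\mathrm{d}u$ is absolutely continuous a routine approximation forces $\Phi=0$ almost everywhere. In particular the $\ell=0$ moment vanishes, giving $\int_{\mathbb{B}_n} f\,z^{a}\bar z^{b}\,\mathrm{d}\nu_n=0$ for the arbitrary pair $(a,b)$, which is what was needed.

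I expect the main obstacle to be the bookkeeping around the lowest-order term: the hypothesis never gives direct control of the $\ell=0$ case (which is exactly the moment we want), because $0$ typically fails to lie in the good set. The resolution is precisely the passage from Theorem~\ref{theorem-2}(a) to density in the subspace of functions vanishing at the origin, followed by the observation that an $L^1$ density orthogonal to all such functions must vanish almost everywhere. The only other point requiring care is checking that the co-sparseness of the complement of $S$ survives the shifts by $|a|$ and $|b|$, which is the elementary comparison noted above. (Theorem~\ref{theorem-3} is not needed for this radial route, though it would furnish an alternative argument, via direct density of $\{z^m\bar z^k\}$, in the case $n=1$.)
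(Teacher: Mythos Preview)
Your argument is correct, and it takes a genuinely different route from the paper. The paper fibers $\mathbb{B}_n$ as $z=(z_1,\sqrt{1-|z_1|^2}\,w)$ with $z_1\in\mathbb{B}_1$ and $w\in\mathbb{B}_{n-1}$; for fixed $\tilde m,\tilde k\in\mathbb{N}^{n-1}$ it shows that the slice function $f_{\tilde m,\tilde k}(z_1)=\int_{\mathbb{B}_{n-1}}f(z_1,\sqrt{1-|z_1|^2}\,w)\,w^{\tilde m}\bar w^{\tilde k}\,\mathrm{d}\nu_{n-1}(w)$ is annihilated by enough monomials $z_1^{m_1}\bar z_1^{m_1+j}$ to invoke Trent's disk M\"untz--Sz\'asz theorem (Theorem~\ref{theorem-3}), and then finishes by density of monomials in $w,\bar w$. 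Your approach instead passes to polar coordinates $z=r\zeta$ and manufactures the missing moments by inserting $|z|^{2\ell}$ and expanding multinomially, which reduces everything to the classical interval M\"untz--Sz\'asz theorem (Theorem~\ref{theorem-2}) applied to a single $L^1$ function $\Phi$ on $[0,1]$. The upshot is that your proof dispenses with Theorem~\ref{theorem-3} altogether and treats all dimensions $n\geq 1$ uniformly, at the price of the small extra step of stripping the constant from the M\"untz system and arguing that an $L^1$ function orthogonal to $\{g\in C([0,1]):g(0)=0\}$ must vanish a.e.\ (which you handle correctly via absolute continuity). The paper's route, by contrast, leans on the ready-made two-variable result and avoids that endpoint bookkeeping. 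Both arguments hinge on the same combinatorial observation that the complement of $S$ in $\mathbb{N}$ is stable (in the divergent-reciprocal sense) under finite shifts.
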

\begin{proof}
First suppose $n\geq 2$. For any $z\in\mathbb{B}_n$ we write $z=(z_1,\sqrt{1-|z_1|^2}\,w)$ with $z_1\in\mathbb{B}_1$ and $w\in\mathbb{B}_{n-1}$. Using Fubini's Theorem and the Change of Variables, we then have, for any $g\in L^{1}(\mathbb{B}_n)$,
\begin{equation*}
\int\limits_{\mathbb{B}_n}g(z){\rm d}\nu_{n}(z) = n\int\limits_{\mathbb{B}_1}\!\int\limits_{\mathbb{B}_{n-1}}g(z_1,\sqrt{1-|z_1|^2}\,w)(1-|z_1|^2)^{n-1}{\rm d}\nu_{n-1}(w){\rm d}\nu_{1}(z_1).
\end{equation*}

Let $\tilde{m}$ and $\tilde{k}$ be two multi-indices in $\mathbb{N}^{n-1}$. Let $j\in\mathbb{Z}$ be any integer. Put
\begin{equation*}
M_{j} = \{m_1\in\mathbb{N}^{*}: m_1+|\tilde{m}| \text{ and } m_1+j+|\tilde{k}|\text{ are not in } S\},
\end{equation*}
where $\mathbb{N}^{*}$ denotes $\mathbb{N}\backslash\{0\}$. Then $\mathbb{N}^{*}\backslash M_{j}$ is contained in $(S-|\tilde{m}|)\cup (S-j-|\tilde{k}|)$. Here for any number $a$, $S-a$ denotes the set $\{s-a:s\in S\}$. By our assumption about $S$ we see that $\sum\limits_{m_1\in\mathbb{N}^{*}\backslash M_{j}}\frac{1}{m_{1}}<\infty$. This implies that $\sum\limits_{m_1\in M_j}\frac{1}{m_1}=\infty$.
For any $m_1\in M_j$ and $m_1\geq -j$, let $m=(m_1,\tilde{m})$ and $k=(m_1+j,\tilde{k})$. Then $m$ and $k$ are two multi-indices in $\mathbb{N}^n$ with $|m|,|k|\notin S$. Therefore,
\begin{align*}
0 & = \int\limits_{\mathbb{B}_{n}}f(z)z^{m}\bar{z}^{k}{\rm d}\nu_{n}(z)\\
& = n\int\limits_{\mathbb{B}_1}\int\limits_{\mathbb{B}_{n-1}}f(z_1,\sqrt{1-|z_1|^2}\,w)z_1^{m_1}\bar{z_1}^{m_1+j}w^{\tilde{m}}\bar{w}^{\tilde{k}}\\
& \quad\times (1-|z_1|^2)^{n-1+(|\tilde{m}|+|\tilde{k}|)/2}{\rm d}\nu_{n-1}(w){\rm d}\nu_{1}(z_1)\\
& = n\int\limits_{\mathbb{B}_1}\Big(\int\limits_{B_{n-1}}f(z_1,\sqrt{1-|z_1|^2}\,w)w^{\tilde{m}}\bar{w}^{\tilde{k}}{\rm d}\nu_{n-1}(w)\Big)\\
& \quad\times z_1^{m_1}\bar{z_1}^{m_1+j}(1-|z_1|^2)^{n-1+(|\tilde{m}|+|\tilde{k}|)/2}{\rm d}\nu(z_1).
\end{align*}
Put
\begin{equation*}
f_{\tilde{m},\tilde{k}}(z_1) = \int\limits_{B_{n-1}}f(z_1,\sqrt{1-|z_1|^2}\,w)w^{\tilde{m}}\bar{w}^{\tilde{k}}{\rm d}\nu_{n-1}(w),\quad z_1\in\mathbb{B}_1.
\end{equation*}
 Then for all $j\in\mathbb{Z}$ and $m_1\in M_j$, we have
\begin{equation*}
\int\limits_{\mathbb{B}_1}f_{\tilde{m},\tilde{k}}(z_1)z_1^{m_1}\bar{z_1}^{m_1+j}(1-|z_1|^2)^{n-1+(|\tilde{m}|+|\tilde{k}|)/2}{\rm d}\nu_{1}(z_1) = 0.
\end{equation*}
From Theorem \ref{theorem-3} we conclude that $f_{\tilde{m},\tilde{k}}(z_1)=0$ for almost every $z_1\in\mathbb{B}_1$. Hence there is a null subset $E$ of $\mathbb{B}_1$ such that $f_{\tilde{m},\tilde{k}}(z_1)=0$ for all $z_1\in\mathbb{B}_1\backslash E$ and all $\tilde{m},\tilde{k}\in\mathbb{N}^{n-1}$. So we have
\begin{equation*}
\int\limits_{B_{n-1}}f(z_1,\sqrt{1-|z_1|^2}\,w)w^{\tilde{m}}\bar{w}^{\tilde{k}}{\rm d}\nu_{n-1}(w) = 0
\end{equation*}
for all $\tilde{m},\tilde{k}\in\mathbb{N}^{n-1}$ and all $z\in\mathbb{B}_1\backslash E$. This implies that $f(z)=0$ for almost every $z$ in $\mathbb{B}_n$ since the set of all finite linear combinations of the functions in $\{w^{\tilde{m}}\bar{w}^{\tilde{k}}:\ \tilde{m},\tilde{k}\in\mathbb{N}^n\}$ is dense in $C(\bar{\mathbb{B}}_{n-1})$.

The above argument also works for the case $n=1$ in which there is no $\tilde{m}$ nor $\tilde{k}$.
\end{proof}

We are now ready for a proof of Theorem \ref{theorem-1}.

\begin{proof}
We have seen that if $g$ is a bounded radial function then $T_g$ is a bounded diagonal operator and we have
\begin{equation*}
T_{g} = \sum\limits_{m\in\mathbb{N}^n}\omega(g,|m|)e_{m}\otimes e_{m}.
\end{equation*}
For such a $g$, put $Z(g)=\{m\in\mathbb{N}^{n}: T_{g}e_{m}=0\}=\{m\in\mathbb{N}^{n}: \omega(g,|m|)=0\}$. It is then clear that if $|m|=|k|$ and $k\in Z(g)$ then $m\in Z(g)$. Put $W(g)=\{|m|: m\in Z(g)\}$. It follows from Theorem \ref{theorem-2} that if $g$ is not the zero function, then $\sum\limits_{s\in W(g)\backslash\{0\}}\frac{1}{s}<\infty$.

Now let $f_1,\ldots, f_N$ and $g_1,\ldots, g_M$ be as in the hypothesis of Theorem \ref{theorem-1}. We define
\begin{align*}
Z & = Z(\bar{f}_1)\cup\cdots\cup Z(\bar{f}_N)\cup Z(g_1)\cup\cdots\cup Z(g_M),\text{and}\\
W & = W(\bar{f}_1)\cup\cdots\cup W(\bar{f}_N)\cup W(g_1)\cup\cdots\cup W(g_M).
\end{align*}
Since none of the functions $f_1,\ldots,f_N,g_1,\ldots,g_M$ is the zero function, we have $\sum\limits_{s\in W\backslash\{0\}}\frac{1}{s}<\infty$. For any multi-indices $m,k\in\mathbb{N}^{n}$ with $|m|,|k|\notin W$ we have $m,k\notin Z$. Hence $T_{g_1}\cdots T_{g_M}e_{m}=c_{m}e_{m}$ and $T_{\bar{f}_N}\cdots T_{\bar{f}_1}e_{k}=d_{k}e_{k}$ with $c_{m},d_{k}\neq 0$. Therefore,
\begin{align*}
\langle fe_{m},e_{k}\rangle & = \langle T_{f}e_{m}, e_{k}\rangle\\
& = \dfrac{1}{c_{m}\bar{d}_{k}}\langle T_{f}T_{g_1}\cdots T_{g_M}e_{m}, T_{\bar{f}_N}\cdots T_{\bar{f}_1}e_{k}\rangle\\
& = \dfrac{1}{c_{m}\bar{d}_{k}}\langle T_{f_1}\cdots T_{f_N} T_{f}T_{1}\cdots T_{g_M} e_{m}, e_{k}\rangle = 0.
\end{align*}
This implies that $\int\limits_{\mathbb{B}_{n}} f(z)z^{m}\bar{z}^{k}{\rm d}\nu_{n}(z)=0$ whenever $m,k\in\mathbb{N}^n$ such that $|m|,|k|\notin W$. Proposition \ref{prop-1} now shows that $f(z)=0$ for almost every $z\in\mathbb{B}_n$.
\end{proof}

\begin{remark} We would like to remark here that with essentially the same proof, Theorem \ref{theorem-1} also holds for Toeplitz operators on weighted Bergman spaces $A^2_{\alpha}(\mathbb{B}_n)$ with $\alpha>-1$.
\end{remark}

\noindent
\textit{Acknowledgments} This work was completed when the author was attending the Fall 2007 Thematic Program on Operator Algebras at the Fields Institute for Research in Mathematical Sciences.


\begin{thebibliography}{10}

\bibitem{Ahern2001}
Patrick Ahern and {\v{Z}}eljko {\v{C}}u{\v{c}}kovi{\'c}.
\newblock A theorem of {B}rown-{H}almos type for {B}ergman space {T}oeplitz
  operators.
\newblock {\em J. Funct. Anal.}, 187(1):200--210, 2001.

\bibitem{Ahern2004}
Patrick Ahern and {\v{Z}}eljko {\v{C}}u{\v{c}}kovi{\'c}.
\newblock Some examples related to the {B}rown-{H}almos theorem for the
  {B}ergman space.
\newblock {\em Acta Sci. Math. (Szeged)}, 70(1-2):373--378, 2004.

\bibitem{Brown1963}
Arlen Brown and Paul~R. Halmos.
\newblock Algebraic properties of {T}oeplitz operators.
\newblock {\em J. Reine Angew. Math.}, 213:89--102, 1963/1964.

\bibitem{Choe2006}
Boorim Choe and Hyungwoon Koo.
\newblock Zero products of {T}oeplitz operators with harmonic symbols.
\newblock {\em J. Funct. Anal.}, 233(2):307--334, 2006.

\bibitem{Gu2000}
Caixing Gu.
\newblock Products of several {T}oeplitz operators.
\newblock {\em J. Funct. Anal.}, 171(2):483--527, 2000.

\bibitem{Guo1996}
Kun~Yu Guo.
\newblock A problem on products of {T}oeplitz operators.
\newblock {\em Proc. Amer. Math. Soc.}, 124(3):869--871, 1996.

\bibitem{Martinez-Avendano2007}
Rub{\'e}n~A. Mart{\'{\i}}nez-Avenda{\~n}o and Peter Rosenthal.
\newblock {\em An introduction to operators on the {H}ardy-{H}ilbert space},
  volume 237 of {\em Graduate Texts in Mathematics}.
\newblock Springer, New York, 2007.

\bibitem{Rudin1987}
Walter Rudin.
\newblock {\em Real and complex analysis}.
\newblock McGraw-Hill Book Co., New York, third edition, 1987.

\bibitem{Trent1981}
Tavan~T. Trent.
\newblock A {M}\"untz-{S}z\'asz theorem for {$C(\bar D)$}.
\newblock {\em Proc. Amer. Math. Soc.}, 83(2):296--298, 1981.

\bibitem{Zhu2005}
Kehe Zhu.
\newblock {\em Spaces of holomorphic functions in the unit ball}, volume 226 of
  {\em Graduate Texts in Mathematics}.
\newblock Springer-Verlag, New York, 2005.

\end{thebibliography}
\end{document}